\theoremstyle{plain} 
\newtheorem*{theorem}{Теорема} 
\newtheorem*{propos}{Предложение}
\newtheorem*{corollary}{Следствие}
\theoremstyle{definition}
\renewcommand{\leq}{\leqslant}
\newcommand{\RR}{\mathbb{R}}
\DeclareMathOperator{\rh}{\text{\rm \tiny rh}}
 \DeclareMathOperator{\lh}{\text{\rm \tiny lh}}
\DeclareMathOperator{\dd}{\,{\mathrm d\!}}
\renewcommand{\leq}{\leqslant}
\begin{document} 
\title{Интегральные средние\\ и максимизация функций}
	
\author[Б.\,Н.~Хабибуллин]{Б.\,Н.~Хабибуллин}
\address{Башкирский государственный университет}
\email{khabib-bulat@mail.ru}

\date{18.12.2019}
\selectlanguage{russian}
 \maketitle

\begin{abstract}  Some properties of integral averages of functions on intervals and their asymptotic behavior are investigated. The results are aimed at applications to entire and subharmonic functions.

\textbf{Bibliography:} 6 titles 

\textbf{Key words:} Riemann\,--\,Stieltjes integral, integral average, increasing function, decreasing function

\end{abstract}

\markright{Интегральные  средние и максимизация функций}

\footnotetext[0]{Исследование выполнено за счёт гранта Российского
научного фонда (проект № 18-11-00002).}


Пусть $\RR_{\pm\infty}:=\{-\infty\}\cup \RR\cup \{+\infty\}$ --- расширение \textit{вещественной оси\/} $\RR$, $X$ --- подмножество в  $\RR_{\pm\infty}$.

Функция $f\colon X\to \RR_{\pm\infty}$ {\it возрастающая} (соответственно \textit{строго возрастающая\/}) на $X$, если для любых $x_1,x_2\in X$ из $x_1<x_2$ следует $f(x_1)\leq f(x_2)$ (соответственно $f(x_1)<f(x_2)$). 

Функция $f$ \textit{убывающая\/} (соответственно \textit{строго убывающая\/}) на $X$, если противоположная функция $-f$ {\it возрастающая} (соответственно \textit{строго возрастающая\/}) на $X$.  

Всюду в данной заметке $X:=[a,b)\subset \RR$, $a<b$. 

Пусть $m\colon [a,b)\to \RR$ --- \textit{строго возрастающая функция.\/} 
\textit{Интегральное среднее ${\mathsf A}_m(r,R; f)$ функции $f$ по функции $m$ на  интервале\/
$[r,R]\subset [a,b)$,} $r<R$,  определяем через  интеграл Стилтьеса:
\begin{equation}\label{Am}
{\mathsf A}_m(r,R;f):=\frac{1}{m(R)-m(r)}\int_r^R f\dd m, \quad a\leq r<R<b,
\end{equation}
если, конечно, интегралы Стилтьеса корректно определены. Например, достаточно, чтобы функция $f$ была \textit{интегрируема по Риману,\/} а функция $m$ была \textit{непрерывна справа в точке\/} $a$,  \textit{дифференцируема на интервале\/} $(a,b)$ с \textit{ограниченной производной} на каждом интервале $[a',b']\subset (a,b)$, что мы и будем предполагать всюду ниже.  В этом случае  при $a<r<R<b$ непосредственно вычисляются  частные производные 
\begin{subequations}\label{Amp}
\begin{align}
\frac{\partial}{\partial r} {\mathsf A}_m(r,R;f)&=
\frac{m'(r)}{\bigl(m(R)-m(r)\bigr)^2}\int_r^R\bigl(f(x)-f(r)\bigr)\dd m(x).
\tag{\ref{Amp}r}\label{{Amp}r}\\
\frac{\partial}{\partial R} {\mathsf A}_m(r,R;f)&=
\frac{m'(R)}{\bigl(m(R)-m(r)\bigr)^2}\int_r^R\bigl(f(R)-f(x)\bigr)\dd m(x). 
\tag{\ref{Amp}R}\label{{Amp}R}
\end{align}
\end{subequations}

\begin{propos}\label{prf} Пусть  $f\colon [a,b)\to \RR$ ---  убывающая функция. Тогда  в принятых  выше соглашениях интегральное среднее \eqref{Am} убывает по переменной $r$ на $[a,R)$ и по переменной   $R$ 
на $[r,b)$. При этом
\begin{equation}\label{aAm}
\sup_{r\in [a,R)} {\mathsf A}_m(r,R;f)= {\mathsf A}_m(a,R;f) \quad\text{для всех 
$R\in [a,b)$}.
\end{equation} 
\end{propos}
\begin{proof} Сразу следует соответственно из \eqref{{Amp}r} и \eqref{{Amp}R}.
\end{proof}

Для функции $f\colon [a,b)\to \RR_{\pm\infty}$  определим её 
\textit{правую\/} и {\it левую максимизации\/} 
\begin{equation}\label{mf}
{\mathsf m}^{\rh}[f](r):=\sup_{x\in [r,b)}f(x), 
\quad {\mathsf m}^{\lh}[f](r):=\sup_{x\in [a,r]}f(x), \quad r\in [a,b). 
\end{equation}
По построению, очевидно, правая максимизация ${\mathsf m}^{\rh}[f]$ --- \textit{убывающая функция,\/} левая максимизация ${\mathsf m}^{\lh}[f]$ --- \textit{возрастающая  функция,\/} и обе со свойствами
\begin{equation}\label{mflh}
f\leq \begin{cases}
{\mathsf m}^{\rh}[f],\\
{\mathsf m}^{\lh}[f]
\end{cases} \quad \text{на $[a,b)$}, 
\qquad  \sup_{[a,b)} f= 
\begin{cases}
\sup_{[a,b)}{\mathsf m}^{\rh}[f],\\
\sup_{[a,b)}{\mathsf m}^{\lh}[f].  
\end{cases}
\end{equation}

\begin{theorem}\label{thIA} Пусть  для функции $f\colon [a,b)\to \RR^+$  
\begin{equation}\label{f0m}
\sup_{[a,b)} f<+\infty, \quad \lim_{b>x\to b} f(x)=0; \quad \lim_{b>x\to b} m(x)=+\infty. 
\end{equation}
Тогда функция ${\mathsf A}_m(a,R;{\mathsf m}^{\rh}[f])$ убывающая по $R\in (a,b)$,  
\begin{subequations}\label{F}
\begin{align}
{\mathsf A}_m(r,R;f)&\leq {\mathsf A}_m(a,R;{\mathsf m}^{\rh}[f])\quad \text{при всех $a\leq r<R\in (a,b)$},
\tag{\ref{F}A}\label{F1}\\
\lim_{b>R\to b}{\mathsf A}_m&(a,R;{\mathsf m}^{\rh}[f])=0,
\tag{\ref{F}$_0$}\label{F0}
\end{align}
\end{subequations} 

В дополнение к \eqref{f0m}, пусть  возрастающая функция $n\colon [a,b)\to \RR^+$  с $n(a)>0$  ограничена на каждом интервале $[a,b']\subset [a,b)$, и
\begin{equation}\label{n}
\lim_{b>x\to b} n(x)=+\infty. 
\end{equation} 
Тогда для возрастающей функции ${\mathsf m}^{\lh}\bigl[n{\mathsf m}^{\rh}[f]\bigr]$, домноженной на $1/n$, имеем соотношения 
\begin{subequations}\label{Anm}
\begin{align}
f(R)&\leq {\mathsf A}_m\Bigl(r,R; \frac{1}{n}{\mathsf m}^{\lh}\bigl[n{\mathsf m}^{\rh}[f]\bigr]\Bigr)
\quad \text{при $a\leq r<R<+\infty$},
\tag{\ref{Anm}A}\label{{Anm}A}\\
\lim_{b>R\to b}&\frac{1}{n(R)}\Bigl({\mathsf m}^{\lh}\bigl[n{\mathsf m}^{\rh}[f]\bigr]\Bigr)(R)=0.
\tag{\ref{Anm}$_0$}\label{{Anm}0}
\end{align}
\end{subequations}
\end{theorem}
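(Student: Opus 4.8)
The plan is to base everything on the right maximant $g:={\mathsf m}^{\rh}[f]$ and on the fact that it is decreasing. First I would record the properties of $g$ that do all the work: by the discussion accompanying \eqref{mf}--\eqref{mflh}, $g$ is decreasing and $f\le g$, while \eqref{f0m} gives $0\le g\le\sup_{[a,b)}f<+\infty$ and $g(r)\to 0$ as $r\to b$ (for $\e>0$ choose $c$ with $f<\e$ on $[c,b)$; then $g\le\e$ on $[c,b)$). Applying the Proposition above to the decreasing function $g$ with $r=a$ fixed shows that $R\mapsto{\mathsf A}_m(a,R;g)$ decreases on $(a,b)$, which is the first assertion. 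For \eqref{F1} I would chain two inequalities: since $f\le g$ and $m$ is strictly increasing, $\int_r^R f\dd m\le\int_r^R g\dd m$, so ${\mathsf A}_m(r,R;f)\le{\mathsf A}_m(r,R;g)$; and since $g$ is decreasing, \eqref{aAm} gives ${\mathsf A}_m(r,R;g)\le{\mathsf A}_m(a,R;g)$.

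To get the limit \eqref{F0} I would split the interval. Fix $\e>0$ and $c\in(a,b)$ with $g\le\e$ on $[c,b)$; for $R>c$ write $\int_a^R g\dd m=\int_a^c g\dd m+\int_c^R g\dd m$. The head is a fixed constant bounded by $\sup f\cdot(m(c)-m(a))$, and the tail is at most $\e(m(R)-m(a))$. Dividing by $m(R)-m(a)$ yields ${\mathsf A}_m(a,R;g)\le\const/(m(R)-m(a))+\e$, and since $m(R)\to+\infty$ the first summand tends to $0$; letting $\e\downarrow0$ and using $g\ge0$ gives \eqref{F0}.

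For the second part put $H:={\mathsf m}^{\lh}[ng]$, so that $H(x)=\sup_{t\in[a,x]}n(t)g(t)\ge n(x)g(x)$. Then \eqref{{Anm}A} holds already pointwise before integration: for $x\in[r,R]$ one has $\frac{H(x)}{n(x)}\ge g(x)\ge g(R)\ge f(R)$, the middle inequality because $g$ decreases and the last because $g(R)=\sup_{[R,b)}f\ge f(R)$; integrating $\dd m$ over $[r,R]$ and dividing by $m(R)-m(r)$ converts this constant lower bound into $f(R)$ itself. The remaining limit \eqref{{Anm}0} is the supremum-analogue of \eqref{F0}: with $c$ chosen as above and using that $n$ increases, $H(R)=\max\bigl(\sup_{[a,c]}ng,\ \sup_{[c,R]}ng\bigr)\le n(c)\sup f+\e\, n(R)$, so $H(R)/n(R)\le n(c)\sup f/n(R)+\e\to\e$ as $R\to b$ (recall $n(R)\to+\infty$), whence the limit is $0$.

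The substantive content is concentrated in the two asymptotic relations \eqref{F0} and \eqref{{Anm}0}; the rest is monotonicity bookkeeping through the Proposition and the inequality $g\ge f$. The only delicate point is the interplay between the decay $g(x)\to0$ and the growths $m(R),n(R)\to+\infty$: the threshold $c$ must be fixed first (depending only on $\e$), after which the head contribution becomes a genuine constant that the denominator annihilates. I do not expect the $C^1$ hypotheses on $m$ and $n$ to be needed for these inequalities themselves; they serve to justify the derivative formulas \eqref{{Amp}r}, \eqref{{Amp}R} behind the Proposition and the existence of the Stieltjes integrals.
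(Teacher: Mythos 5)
Your proof is correct and takes essentially the same route as the paper's: the first part rests on the Proposition (monotonicity and \eqref{aAm}) together with $f\leq{\mathsf m}^{\rh}[f]$, and both limit relations \eqref{F0} and \eqref{{Anm}0} are obtained by the same head/tail splitting at a threshold fixed in advance by $\e$, with $m(R)\to+\infty$ (resp.\ $n(R)\to+\infty$) killing the head term. The only cosmetic difference is that you verify \eqref{{Anm}A} pointwise on $[r,R]$ before integrating, whereas the paper writes the identical inequalities as a chain of integral estimates.
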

\begin{proof} Докажем первую часть. Функция ${\mathsf A}_m(a,R;{\mathsf m}^{\rh}[f])$ убывающая по $R\in (a,b)$ по Предложению. Неравенство \eqref{F1} получаем из цепочки 
\begin{multline*}
{\mathsf A}_m(r,R;f)\overset{\eqref{mflh}}{\leq} {\mathsf A}_m(r,R;{\mathsf m}^{\rh}[f])\\
\leq \sup_{r\in [a,R]} {\mathsf A}_m(r,R;{\mathsf m}^{\rh}[f])
={\mathsf A}_m(a,R;{\mathsf m}^{\rh}[f]),
\end{multline*} 
где последнее равенство --- это равенство \eqref{aAm} из Предложения.

Пусть $c>0$. Из второго  соотношения в \eqref{f0m} следует, что 
${\mathsf m}^{\rh}[f](x)=o(1)$ при $b>x\to b$, и 
существует число $R_c\in [a,b)$, для которого 
\begin{equation}\label{Rc}
{\mathsf m}^{\rh}[f](x)\leq c\quad\text{при $x\in [R_c,b)$}.
\end{equation} При $R>R_c$ по определению \eqref{Am} интегрального среднего  
\begin{multline*}
{\mathsf A}_m\bigl((a,R;{\mathsf m}^{\rh}[f]\bigr)=\frac{1}{m(R)-m(a)}\left(\int_a^{R_c}+\int_{R_c}^R \right) {\mathsf m}^{\rh}[f]\dd m\\
\leq 
\frac{\bigl(m(R_c)-m(a)\bigr)\sup_{[a,R_c]}{\mathsf m}^{\rh}[f]}{m(R)-m(a)}
+c\frac{m(R)-m(R_c)}{m(R)-m(a)}.
\end{multline*}
Отсюда при $b>R\to b$ по первому и третьему соотношениям в \eqref{f0m}  получаем
\begin{equation*}
\limsup_{b>R\to b}{\mathsf A}_m\bigl(a,R;{\mathsf m}^{\rh}[f]\bigr)\leq c.
\end{equation*}
Ввиду произвола в выборе числа $c>0$ это даёт \eqref{F0}.

Докажем вторую часть. Согласно свойствам \eqref{mf}--\eqref{mflh} функция ${\mathsf m}^{\lh}\bigl[n{\mathsf m}^{\rh}[f]\bigr]$ возрастающая как левая максимизация и 
по тем же свойствам 
\begin{multline*}
f(R)\leq {\mathsf m}^{\rh}[f] (R)
=\frac{1}{m(R)-m(r)}\int_r^R {\mathsf m}^{\rh}[f] (R) \dd m(x)\\
\leq \frac{1}{m(R)-m(r)}\int_r^R {\mathsf m}^{\rh}[f] (x) \dd m(x)
\\=\frac{1}{m(R)-m(r)}\int_r^R \frac{1}{n(x)}n(x){\mathsf m}^{\rh}[f] (x) \dd m(x)\\
\leq \frac{1}{m(R)-m(r)}\int_r^R \frac{1}{n(x)}
\Bigl({\mathsf m}^{\lh}\bigl[n{\mathsf m}^{\rh}[f]\bigr]\Bigr) (x) 
\dd m(x)\\
= {\mathsf A}_m\Bigl(r,R; \frac{1}{n}{\mathsf m}^{\lh}\bigl[n{\mathsf m}^{\rh}[f]\bigr]\Bigr),
\end{multline*}
что даёт в точности \eqref{{Anm}A}. Выберем $R_c\in [a,b)$ как в \eqref{Rc} и  с учётом  \eqref{mf}--\eqref{mflh} и с возрастающей функцией $n$, удовлетворяющей \eqref{n}, при $R>R_c$ оценим 
\begin{multline*}
\frac{1}{n(R)}\Bigl({\mathsf m}^{\lh}\bigl[n{\mathsf m}^{\rh}[f]\bigr]\Bigr)(R)
=\frac{1}{n(R)}\sup_{r\in [a,R]} n(r)\bigl({\mathsf m}^{\rh}[f]\bigr)(r)
\\
\frac{1}{n(R)}\sup_{r\in [a,R_c]}n(r)\bigl({\mathsf m}^{\rh}[f]\bigr)(r) +
\frac{1}{n(R)} \sup_{r\in [R_c,R]}n(r)\bigl({\mathsf m}^{\rh}[f]\bigr)(r)
\\
\leq
\frac{n(R_c)}{n(R)}\sup_{r\in [a,R_c]}\bigl({\mathsf m}^{\rh}[f]\bigr)(r) +
\sup_{r\in [R_c,R]}\bigl({\mathsf m}^{\rh}[f]\bigr)(r).
\end{multline*}
Отсюда при $b>R\to b$ по  \eqref{mflh} и \eqref{n}  получаем
\begin{equation*}
\limsup_{b>R\to b}\frac{1}{n(R)}\Bigl({\mathsf m}^{\lh}\bigl[n{\mathsf m}^{\rh}[f]\bigr]\Bigr)(R)\leq c.
\end{equation*}
Ввиду произвола в выборе числа $c>0$ это даёт \eqref{{Anm}0}.
\end{proof}

Следующее Следствие будет использована в ином месте для развития  уже классических результатов П.~Мальявена и Л.\,А.~Рубела \cite{MR}, \cite[22]{RC}, а также их существенных обобщений  из \cite{KhaD88}--\cite[3.2]{Khsur}.   

\begin{corollary}\label{corln} Пусть $0<r_0\in \RR^+$. 

Если  для функции $Q\colon [r_0,+\infty) \to \RR^+$ существует предел
\begin{equation}\label{diy}
\lim_{x\to +\infty}\frac{Q(x)}{x}=0,
\end{equation}
то найдётся  убывающая функция   $d\colon [r_0,+\infty)\to \RR^+$, для которой 
\begin{subequations}\label{dQ}
\begin{align}
\int_r^R\frac{Q(x)}{x^2}\dd t&\leq d(R)\ln\frac{R}{r}\quad \text{при всех $r_0\leq r<R<+\infty$},
\tag{\ref{dQ}A}\label{{ad}A}\\
\lim_{R\to +\infty} d(R)&=0. 
\tag{\ref{dQ}$_0$}\label{{ad}0}
\end{align}
\end{subequations} 
Если для функции $d\colon [r_0,+\infty) \to \RR^+$ выполнено \eqref{{ad}0}, то найдётся возрастающая функция $Q\colon [r_0,+\infty)\to \RR^+$, для которой одновременно  выполнены соотношения  \eqref{diy} и 
\begin{equation}\label{Qd}
d(R)\ln\frac{R}{r} \leq \int_r^R\frac{Q(x)}{x^2}\dd x
\quad \text{при всех $r_0\leq r<R<+\infty$}.
\end{equation}
\end{corollary}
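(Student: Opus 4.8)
The plan is to deduce both halves of the statement from the theorem above by the single substitution $m(x):=\ln x$ on $[a,b):=[r_0,+\infty)$. Then $m$ is increasing and continuous with $\lim_{x\to+\infty}m(x)=+\infty$, while $m(R)-m(r)=\ln\frac{R}{r}$ and $\dd m(x)=\dd x/x$, so that by \eqref{Am} one has ${\mathsf A}_m(r,R;f)=\frac{1}{\ln(R/r)}\int_r^R f(x)\,\frac{\dd x}{x}$ for every $f\geq0$. Putting $f(x):=Q(x)/x$ turns this average into $\frac{1}{\ln(R/r)}\int_r^R\frac{Q(x)}{x^2}\,\dd x$, exactly the quantity in \eqref{{ad}A} and \eqref{Qd}; moreover \eqref{diy} is precisely $\lim_{x\to+\infty}f(x)=0$, the second condition of \eqref{f0m}, and under the (implicit) local boundedness of $Q$ this limit gives $\sup f<+\infty$, the first condition, while $\lim m=+\infty$ is the third.

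For the direct implication I would take $f:=Q/x$ and set $d(R):={\mathsf A}_m(r_0,R;{\mathsf m}^{\rh}[f])$. The theorem says this function decreases in $R$, which is the required monotonicity of $d$, and tends to $0$ by \eqref{F0}, which is \eqref{{ad}0}. Inequality \eqref{F1} then reads ${\mathsf A}_m(r,R;f)\leq d(R)$ for $r_0\leq r<R$; multiplying by $\ln\frac{R}{r}$ and rewriting the left side as $\int_r^R\frac{Q(x)}{x^2}\,\dd x$ yields \eqref{{ad}A}. So the first half is a direct transcription of the first half of the theorem.

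For the converse I would again take $m=\ln$ and apply the second half of the theorem to $f:=d$ --- assuming local boundedness of $d$, which together with \eqref{{ad}0} gives $\sup d<+\infty$ and $\lim_{x\to+\infty}d(x)=0$ --- with the weight $n(x):=x$. This $n$ is increasing, has $n(r_0)=r_0>0$, is bounded on every $[r_0,b']$, and obeys \eqref{n}. The whole point of the choice $n(x)=x$ is that in ${\mathsf A}_m\bigl(r,R;\tfrac1n{\mathsf m}^{\lh}[n{\mathsf m}^{\rh}[d]]\bigr)$ the integrand $\frac1x\cdot\frac{1}{n(x)}{\mathsf m}^{\lh}[n{\mathsf m}^{\rh}[d]](x)$ becomes $\frac{1}{x^2}{\mathsf m}^{\lh}[x{\mathsf m}^{\rh}[d]](x)$, so that with $Q(x):={\mathsf m}^{\lh}\bigl[x{\mathsf m}^{\rh}[d]\bigr](x)$ --- a running supremum, hence nonnegative and increasing --- this average equals $\frac{1}{\ln(R/r)}\int_r^R\frac{Q(x)}{x^2}\,\dd x$. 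Now \eqref{{Anm}A} reads $d(R)=f(R)\leq\frac{1}{\ln(R/r)}\int_r^R\frac{Q(x)}{x^2}\,\dd x$, which after multiplication by $\ln\frac{R}{r}$ is \eqref{Qd}, while \eqref{{Anm}0} says $Q(R)/R=\frac{1}{n(R)}{\mathsf m}^{\lh}[n{\mathsf m}^{\rh}[d]](R)\to0$, which is \eqref{diy}. Since $Q$ is monotone, $Q(x)/x^2$ is Riemann integrable on compacts and all the integrals are legitimate.

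I expect the only genuine obstacle to be bookkeeping rather than analysis: spotting that $m=\ln$ and the weight $n(x)=x$ are the substitutions converting the abstract averages of the theorem into the logarithmic integrals here, and checking the integrability and boundedness side conditions. The single hypothesis that must be watched is $\sup f<+\infty$ from \eqref{f0m}: in the direct part it requires local boundedness of $Q$, in the converse local boundedness of $d$; both are innocuous in the intended applications but should be recorded explicitly.
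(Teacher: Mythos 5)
Your proposal is correct and follows essentially the same route as the paper: the substitutions $m(x):=\ln x$, $f:=Q/x$ (respectively $f:=d$), the weight $n(x):=x$, the definitions $d(R):={\mathsf A}_m(r_0,R;{\mathsf m}^{\rh}[f])$ and $Q:={\mathsf m}^{\lh}\bigl[n{\mathsf m}^{\rh}[d]\bigr]$, and the use of \eqref{F1}, \eqref{F0}, \eqref{{Anm}A}, \eqref{{Anm}0} are exactly the paper's argument. Your remark that $\sup f<+\infty$ in \eqref{f0m} tacitly requires local boundedness of $Q$ (respectively $d$) is a legitimate point that the paper passes over in silence, but it does not alter the argument.
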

\begin{proof} Положим $a:=r_0$, $b:=+\infty$, $m(x) :=\ln x$ ---  строго возрастающая функция на  $[a,b)$, удовлетворяющая последнему  условию в \eqref{f0m}. 

Пусть для некоторой функции $Q$ выполнено \eqref{diy}. 
Тогда  для $f(x):=Q(x)/x$, $x\in [r_0,+\infty)$, выполнены два первых условия из  
\eqref{f0m}. В обозначениях \eqref{Am}, \eqref{mf} положим 
\begin{multline*}
d(R):={\mathsf A}_m(a,R;{\mathsf m}^{\rh}[f])
\\=\frac{1}{\ln R-\ln r}\int_{r_0}^R \bigl({\mathsf m}^{\rh}[f]\bigr)(x)\dd\, \ln x, 
\quad R\in [r_0,+\infty).
\end{multline*}
Отсюда по Теореме  функция $d$ убывающая,  $$d(R)\overset{\eqref{F0}}{=}o(1), \quad R\to +\infty,$$
 а также 
\begin{multline*}
\frac{1}{\ln (R/ r)}\int_r^R\frac{Q(x)}{x^2} =
\frac{1}{\ln R-\ln r}\int_{r_0}^R f(x)\dd\, \ln x
\overset{\eqref{Am}}{=}{\mathsf A}_m(r,R;f)\\
\overset{\eqref{F1}}{\leq} {\mathsf A}_m(a,R;{\mathsf m}^{\rh}[f])
=d(R)
\end{multline*} 
при всех $a=r_0\leq r<R\in (a,b)=(r_0,+\infty)$,
что даёт в точности \eqref{{ad}A}, а также первую часть Следствия.

Пусть для некоторой функции $d\colon [r_0.+\infty)$ выполнено  \eqref{{ad}0}. 
Положим 
\begin{equation*}
n(x):=x, \; x\in [r_0,+\infty),  \quad f:=d,  \quad
Q:={\mathsf m}^{\lh}\bigl[n{\mathsf m}^{\rh}[f]\bigr].  
\end{equation*}
Тогда по второй части Теоремы  функция $Q$ возрастающая, ввиду 
\eqref{{Anm}0} выполнено \eqref{diy}, а согласно \eqref{{Anm}A}
получаем 
\begin{multline*}
d(R)=f(R)\leq {\mathsf A}_m\Bigl(r,R; \frac{1}{n}{\mathsf m}^{\lh}\bigl[n{\mathsf m}^{\rh}[f]\bigr]\Bigr)\\
\overset{\eqref{Am}}{=}\frac{1}{\ln R-\ln r}\int_r^R \frac{Q(x)}{x}\dd \ln x \\
=\frac{1}{\ln (R/r)}\int_r^R  \frac{Q(x)}{x^2} \dd x \quad \text{для всех $a=r_0\leq r<R<b=+\infty$},
\end{multline*}
что даёт в точности \eqref{Qd}.
\end{proof}

\end{document}